\author{Poo-Sung Park}
\address{Department of Mathematics Education\\ 
Kyungnam University\\
Changwon, 51767 \\
Republic of Korea}
\email{pspark@kyungnam.ac.kr}
\title[Multiplicative functions additive on triangular numbers]{Multiplicative functions which are\\ additive on triangular numbers}
\keywords{identity function, multiplicative function, triangular numbers}
\subjclass[2010]{11P32, 11A25}
\theoremstyle{plain}
\newtheorem{thm}{Theorem}[section] 
\newtheorem{lem}[thm]{Lemma} 
\theoremstyle{definition}
\theoremstyle{remark}
\newcommand{\tn}{\triangle}
\newcommand{\ptn}{\triangle^{\hspace{-.6ex}\raisebox{.3ex}{\scalebox{.5}{\ensuremath +}}}}
\begin{document}

\thanks{This research was supported by Basic Science Research Program through the National Research Foundation of Korea(NRF) funded by the Ministry of  Science and ICT(NRF-2017R1A2B1010761).}

\begin{abstract}
Fix $k \ge 3$. If a multiplicative function $f$ satisfies 
\[
f(x_1+x_2+\dots+x_k) = f(x_1) + f(x_2) + \dots + f(x_k)
\]
for arbitrary positive triangular numbers $x_1, x_2, \dots, x_k$, then $f$ is the identity function. This extends Chung and Phong's work for $k=2$.
\end{abstract}

\maketitle

\section{Introduction}

Claudia Spiro's paper \cite{Spiro} in 1992 has inspired lots of mathematicians to produce many related papers. She showed that a multiplicative function $f$ satisfying $f(p+q) = f(p)+f(q)$ for arbitrary prime numbers $p$ and $q$ is the identity function under some condition. Let $E$ be a set of arithmetic functions and let $S$ be a set of positive integers. Spiro dubbed $S$ the \emph{additive uniqueness set} for $E$ if a function $f \in E$ is uniquely determined under the condition $f(a+b) = f(a)+f(b)$ for $a,b \in S$.

In 1999 Chung and Phong \cite{C-P} showed that the set of positive triangular numbers and the set of positive tetrahedral numbers are new additive uniqueness sets for multiplicative functions. They also conjectured that the set
\[
H_k = \left\{ \frac{n(n+1)\dots(n+k-1)}{1\cdot2\dots k} \,\bigg|\, n=1,2,3, \dots \right\}
\]
is an additive uniqueness set for every $k \ge 4$.

In 2010 Fang \cite{Fang} extended Spiro's work to the condition $f(p+q+r) = f(p)+f(q)+f(r)$ for arbitrary prime numbers $p, q, r$. His work was generalized by Dubickas and \v{S}arka \cite{D-S} to sums of arbitrary number of primes.

Let us consider the general condition \emph{$k$-additivity}. That is, if a function $f \in E$ satisfying $f(x_1+x_2+\dots+x_k) = f(x_1)+f(x_2)+\dots+f(x_k)$ for arbitrary $x_i \in S$ is uniquely determined, we call $S$ the $k$-additive uniqueness set for $E$. We can say that the set of prime numbers is the $k$-additive uniqueness set with $k \ge 2$.

Here is an interesting example. The set of nonzero squares for the set of multiplicative functions is not a $2$-additive uniqueness set \cite{Chung}, but is $k$-additive uniqueness set for every $k \ge 3$ \cite{Park}. So it is natural to ask whether a $2$-additive uniqueness set is also a $k$-additive uniqueness set or not for $k \ge 3$.

Let $\mathbb{T}$ be the set of triangular numbers $T_n = \frac{n(n+1)}{2}$ for $n \ge 1$. That is,
\[
\mathbb{T} = \{ 1, 3, 6, 10, 15, 21, 28, 36, 45, 55, \dots \}.
\]
In this article, we show that $\mathbb{T}$ is the $k$-additive uniqueness set for multiplicative functions. This extends Chung and Phong's work for $2$-additive uniqueness of $\mathbb{T}$. 

The proof consists of three parts. The first is about the $3$-additivity, the second is about the $4$-additivity, and the last is about the $k$-additivity with $k \ge 5$. For convenience, we denote a triangular number by $\tn$. If the triangular number is restricted to be positive, we use the symbol $\ptn$.

%\begin{remark}
%In The On-Line Encyclopedia of Integer Sequences (OEIS) seven numbers are all which cannot be represented as $\ptn+\ptn+\ptn$ with the index A002097: 1, 2, 4, 6, 11, 20, 29.
%
%If it is the case, the $3$-additive uniqueness of $\mathbb{T}$ is almost trivial. But we could not find any reference about it.
%\end{remark}
%
%$21 = 6+15 = 1+10+10 = 3+6+6+6 = 1+1+1+3+15 = 3+3+3+6+6$

\section{$3$-additive uniqueness set}

\begin{thm}\label{thm:k=3}
If a multiplicative function $f$ satisfies 
\[
f(a+b+c) = f(a)+f(b)+f(c)
\]
for $a,b,c \in \mathbb{T}$, then $f$ is the identity function.
\end{thm}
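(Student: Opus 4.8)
The plan is to pin down $f$ on small triangular numbers first, then bootstrap to a general formula $f(T_n)=T_n$ by induction, and finally leverage multiplicativity to conclude $f=\mathrm{id}$. I would begin by computing $f$ on the smallest values using the identity with cleverly chosen repeated arguments. Taking $a=b=c=1$ gives $f(3)=3f(1)$; since $3\in\mathbb{T}$, writing $3$ another way is not available, but $1+1+1$ is the only representation. Then $1+1+1=3$, $1+1+10=12$, $1+3+6=10$, $3+3+6=12$, $6+6+3=15$, and so on yield a web of linear relations. A key early step is to determine $f(1)$: since $f$ is multiplicative, $f(1)=1$ (the standard normalization, as $f$ is not identically zero — if it were, the additivity would force nothing, but multiplicative functions are conventionally taken with $f(1)=1$; alternatively $f(1)^2=f(1)$ via $f$ multiplicative forces $f(1)\in\{0,1\}$, and $f(1)=0$ will be excluded). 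From $f(3)=3f(1)$ and cross-relations I expect to force $f(1)=1$, hence $f(3)=3$, and then $f(6), f(10), f(15), f(21), \dots$ one at a time.

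Once enough small values are known, the main engine is an inductive argument: assuming $f(T_m)=T_m$ for all $m<n$ (and more generally $f$ behaves correctly on all integers below some bound that arise as sums of three triangular numbers), I would express a target value — either $T_n$ itself or an auxiliary integer $N$ whose factorization I want to control — as a sum $a+b+c$ of three triangular numbers with $a,b,c$ small enough that $f(a),f(b),f(c)$ are already known. This is a Gauss-type three-triangular-numbers representation question: every positive integer is a sum of three triangular numbers (equivalently $8N+3$ is a sum of three odd squares), so there is ample flexibility. The trick is to choose the representation so that the three summands all lie in the already-controlled range; using two fixed small triangular numbers and letting the third vary, e.g. $N = 1 + 1 + (N-2)$ when $N-2\in\mathbb{T}$, or $N = 1 + 3 + (N-4)$, etc., chains successive triangular numbers together and propagates the value $f(T_n)=T_n$ upward.

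To get from "$f$ is the identity on triangular numbers" to "$f$ is the identity everywhere," I would use multiplicativity together with the fact that products and suitable combinations of triangular numbers, evaluated through the additive identity, expose $f$ at prime powers. For a prime power $p^e$, I would find triangular numbers whose sum is a multiple $M p^e$ with $M$ coprime to $p$ and all data already known, then use $f(Mp^e)=f(M)f(p^e)$ to solve for $f(p^e)$; alternatively, realize $p^e$ times a known triangular number as a sum of three known triangular numbers. Iterating over all primes and exponents yields $f(p^e)=p^e$, and multiplicativity finishes the proof.

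The hard part, I expect, is the very beginning: the base cases. With only three summands and the sparse set $\mathbb{T}=\{1,3,6,10,15,\dots\}$, the small integers representable as $a+b+c$ with $a,b,c\in\mathbb{T}$ are limited (the reachable sums start $3,5,8,12,\dots$ — note $3=1+1+1$, then $5=1+1+3$, $7=1+3+3$, $8=1+1+6$, $9=3+3+3$, $10=1+3+6$, $\dots$), so there are relatively few equations to work with, and each value $f(T_n)$ must be wrung out of a small, rigid linear system; ruling out $f(1)=0$ and spurious solutions may require checking several representations and exploiting multiplicativity of $f$ on composite sums (e.g. $f(28)=f(4)f(7)$ with $28=T_7$) to close the gaps. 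Once past this threshold — say once $f$ is known to be the identity on all triangular numbers up to $50$ or so — the inductive step should run smoothly because representations become abundant.
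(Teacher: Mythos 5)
Your outline has the same architecture as the paper's proof --- normalize $f(1)=1$, determine $f$ at small primes by playing additive representations against multiplicative factorizations, then induct on prime powers via a coprime multiple expressible as a sum of three triangular numbers --- but the two load-bearing computations are only promised, not performed, and the inductive engine as you primarily state it does not work. On the base case: the genuine bottleneck is $f(2)$, not $f(1)$ (the latter equals $1$ for any multiplicative $f\not\equiv 0$). Since $2$ is neither triangular nor a sum of three positive triangular numbers, it must be reached through a composite: the paper first gets $f(5)=f(1+1+3)=5$, then compares $f(10)=f(1+3+6)=1+3+f(2)f(3)=4+3f(2)$ with $f(10)=f(2)f(5)=5f(2)$ to conclude $f(2)=2$. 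You list the representation $1+3+6=10$ among your relations but never exhibit this cancellation, and uniqueness of the solution of your ``web of linear relations'' is not automatic --- in the $k=4$ case of this very paper the analogous system admits a spurious solution that must be eliminated by an additional relation.

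On the inductive step: writing $N$ itself as $a+b+c$ with $a,b,c$ positive triangular and less than $N$ is impossible for some $N$ (e.g.\ $4$, $11$, $20$, $29$ admit no such representation; Gauss's three-triangular-number theorem permits zero summands), and controlling that exceptional set is a nontrivial problem in its own right. What actually closes the induction --- and what your fallback sentence about ``a multiple $Mp^e$ with $M$ coprime to $p$'' gestures at without supplying --- is an explicit identity: for $N=p^r$ with $p$ odd, $p\ne 3$, write $p^r=3s\mp 1$ (so $s$ is even) and use
\[
T_{s-1}+T_{s-1}+T_s=\frac{s(3s-1)}{2},\qquad T_{s-1}+T_s+T_s=\frac{s(3s+1)}{2},
\]
which equals $\frac{s}{2}\,p^r$ with $\frac{s}{2}<N$ coprime to $p$; for $N=2^r$ take $2^{r+1}=3s\mp 1$ with $s$ odd, so the same sums equal $s\cdot 2^r$; and for $N=3^r$ use $3T_{3^{r-1}}=3^r\cdot\frac{3^{r-1}+1}{2}$. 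The summands $T_{s-1},T_s$ fall under the induction hypothesis because each factors into coprime integers smaller than $N$. Without these explicit constructions the proposal remains a plan rather than a proof.
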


Clearly, $f(1)=1$ and $f(3)=3$. Note that $f(5) = f(1+1+3) = 5$. The equalities
\begin{align*}
f(10)
&= f(1+3+6) = 4+3f(2) \\
&= f(2) \cdot f(5) = 5f(2)
\end{align*}
yields $f(2) = 2$. Then, $f(6) = 6$ and $f(10) = 10$.

We use induction. Suppose that $f(n) = n$ for all $n < N$. Now let us show $f(N)=N$. We may assume that $N = p^r$ for some prime $p$ by the multiplicity of $f$.

If $N = 3^r$, then from the equalities
\begin{align*}
f(3 T_{3^{r-1}}) 
&= 3f(T_{3^{r-1}}) = 3f\!\left(\frac{3^{r-1}(3^{r-1}+1)}{2}\right) = 3f(3^{r-1}) \cdot f\!\left( \frac{3^{r-1}+1}{2} \right) \\
&= f\!\left( 3^r \frac{3^{r-1}+1}{2} \right) = f(3^{r}) \cdot f\!\left( \frac{3^{r-1}+1}{2} \right) 
\end{align*}
we conclude that $f(3^r) = 3^r$ since $f\!\left( \frac{3^{r-1}+1}{2} \right) = \frac{3^{r-1}+1}{2}$ by induction hypothesis.

Now, assume that $N = p^r = 3s-1$ with odd prime $p$. Note that $f(T_{s-1}) = T_{s-1}$ and $f(T_s) = T_s$ by induction hypothesis since $T_s$ can be factored into integers smaller than $N$. Since
\begin{align*}
f(T_{s-1} + T_{s-1} + T_{s}) 
&= \frac{s(s-1)}{2} + \frac{s(s-1)}{2} + \frac{s(s+1)}{2} = \frac{s(3s-1)}{2} = \frac{s p^r}{2} \\
&= f\!\left( \frac{s(3s-1)}{2} \right) = f\!\left(\frac{s}{2}\right) \cdot f(3s-1) = \frac{s}{2} f(p^r),
\end{align*}
we know that $f(p^r) = p^r$.

If $N = p^r = 3s+1$ with odd prime $p$. Then, the equalities
\begin{align*}
f(T_{s-1} + T_s + T_s) 
&= \frac{s(s-1)}{2} + \frac{s(s+1)}{2} + \frac{s(s+1)}{2} = \frac{s(3s+1)}{2} = \frac{s p^r}{2} \\
&= f\!\left( \frac{s(3s+1)}{2} \right) = f\!\left(\frac{s}{2}\right) \cdot f(3s+1) = \frac{s}{2} f(p^r)
\end{align*}
show that $f(p^r) = p^r$.

Now, we consider the last case $N = 2^r$. Let $2^{r+1} = 3s \pm 1$. Then,
\begin{align*}
f(T_{s-1} + T_{s-1} + T_{s}) 
&= \frac{s(s-1)}{2} + \frac{s(s-1)}{2} + \frac{s(s+1)}{2} = \frac{s(3s-1)}{2} = s 2^r \\
&= f\!\left( \frac{s(3s-1)}{2} \right) = f(s) \cdot f\!\left(\frac{3s-1}{2}\right) = s f(2^r)
\intertext{or}
f(T_{s-1} + T_s + T_{s}) 
&= \frac{s(s-1)}{2} + \frac{s(s+1)}{2} + \frac{s(s+1)}{2} = \frac{s(3s+1)}{2} = s 2^r \\
&= f\!\left( \frac{s(3s+1)}{2} \right) = f(s) \cdot f\!\left(\frac{3s+1}{2}\right) = s f(2^r).
\end{align*}
Thus, $f(2^r) = 2^r$.

\section{$4$-additive uniqueness set}

\begin{thm}\label{thm:k=4}
If a multiplicative function $f$ satisfies 
\[
f(a+b+c+d) = f(a)+f(b)+f(c)+f(d)
\]
for $a,b,c,d \in \mathbb{T}$, then $f$ is the identity function.
\end{thm}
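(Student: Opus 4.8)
The plan is to follow the template of Theorem~\ref{thm:k=3}. First I would pin down the smallest values: multiplicativity gives $f(1)=1$, and $f(4)=f(1+1+1+1)=4$ forces $f(2)=\pm 2$; to fix the sign, combine $f(8)=f(1+1+3+3)=2+2f(3)$ with $f(8)=f(2)^3$, noting that $f(2)=-2$ would give $f(3)=-5$ and hence $f(6)=f(2)f(3)=10$, contradicting $f(6)=f(1+1+1+3)=3+f(3)=-2$. So $f(2)=2$ and $f(3)=3$, which is enough to begin a strong induction on $N$: assuming $f(n)=n$ for all $n<N$, one reduces by multiplicativity to $N=p^r$ and seeks a representation of a convenient multiple of $N$ as a sum of four triangular numbers.

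For odd $p$, set $s=\frac{p^r+1}{2}$, so that $2s-1=p^r$, $s\ge 2$, and $\gcd(s,p)=1$. The identity
\[
3T_{s-1}+T_s=\frac{s(3(s-1)+(s+1))}{2}=s(2s-1)=s\,p^r
\]
writes $s\,p^r$ as a sum of four positive triangular numbers. Evaluating $f$ at this integer two ways finishes the case: additivity gives $f(s\,p^r)=3f(T_{s-1})+f(T_s)=3T_{s-1}+T_s=s\,p^r$, because $p$ divides none of $s-1,s,s+1$ (the sole exception being $p=3$, where $3$ divides $s+1$ but $9$ does not once $r\ge 2$), so every prime power dividing $T_{s-1}$ or $T_s$ is $<N$ and the induction hypothesis applies; multiplicativity gives $f(s\,p^r)=f(s)f(p^r)=s\,f(p^r)$ since $s<p^r$ and $\gcd(s,p)=1$. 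Comparing, $f(p^r)=p^r$. (When $N=3$ one has $T_s=T_2=3=N$, so that case is absorbed into the base step.)

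For $N=2^r$ with $r\ge 2$, the extra ingredient is the identity $4T_{m-1}=2m(m-1)$ taken at $m=2^{r-1}$, namely
\[
4\,T_{2^{r-1}-1}=2\cdot 2^{r-1}(2^{r-1}-1)=2^r(2^{r-1}-1),
\]
once more a sum of four positive triangular numbers. Since $T_{2^{r-1}-1}=2^{r-2}(2^{r-1}-1)$ has all of its prime-power parts below $2^r$, additivity gives $f(2^r(2^{r-1}-1))=4f(T_{2^{r-1}-1})=2^r(2^{r-1}-1)$, while multiplicativity gives $f(2^r(2^{r-1}-1))=f(2^r)(2^{r-1}-1)$ because $2^{r-1}-1$ is odd and $<2^r$; hence $f(2^r)=2^r$, completing the induction.

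I expect the real work to be twofold: finding the two representation identities — above all the power-of-two one, which plays the role that $2^{r+1}=3s\pm 1$ played in the proof of Theorem~\ref{thm:k=3} — and verifying in each case that none of the triangular numbers on the left-hand side is divisible by $N$ and that each factors into prime powers strictly smaller than $N$, so that the induction hypothesis really does evaluate $f$ on it. The small congruence checks ($p\nmid s-1,s,s+1$ with the lone exception at $p=3$, and the oddness of $2^{r-1}-1$) are the points requiring care; the rest of each case is just equating the additive and the multiplicative value of $f$ at one well-chosen integer.
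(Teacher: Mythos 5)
Your induction framework would work, but it is not the paper's: the paper proves (Lemma~\ref{lem:sum_of_k_tn}) that every integer $N \ge 8$ is a sum of four positive triangular numbers, each necessarily $\le N-3 < N$, so once $f(n)=n$ is known for $n \le 7$ the induction closes immediately, with no reduction to prime powers and no representation identities. Your identities $3T_{s-1}+T_s = s\,p^r$ and $4T_{2^{r-1}-1} = 2^r(2^{r-1}-1)$ are correct and would serve as an alternative, at the cost of the divisibility checks you describe (which you carry out adequately).

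The genuine gap is in your base case. You write $f(4)=f(2)^2$ and $f(8)=f(2)^3$, but $f$ is only assumed multiplicative, not completely multiplicative: $f(mn)=f(m)f(n)$ is available only when $\gcd(m,n)=1$, so the values $f(4)$ and $f(8)$ carry no information whatsoever about $f(2)$. (The paper's own arguments confirm this reading of ``multiplicative'': prime powers $p^r$ are treated as independent unknowns throughout, never as $f(p)^r$.) Consequently your derivation of $f(2)=2$ and $f(3)=3$ collapses, and with it the base of your induction, since every subsequent step uses these two values. Pinning them down is in fact the hard part of the theorem: the paper extracts the system $3+f(3)=f(2)f(3)$, $1+3f(3)=f(2)f(5)$, $3f(3)+f(2)f(3)=f(3)f(5)$ from the decompositions $6=1+1+1+3$, $10=1+3+3+3$, $15=3+3+3+6$, finds that besides the desired solution it admits the spurious one $f(2)=-2$, $f(3)=-1$, $f(5)=1$, and must evaluate $f(9)$ and $f(18)$ in two ways to eliminate it. Your argument never confronts this spurious solution; any correct proof must, and your shortcut around it is exactly the step that is invalid.
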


\begin{lem}\label{lem:sum_of_k_tn}
Let $\mathbb{T}_k$ be the set of sums of $k$ $\ptn$. If $k \ge 4$, then $\mathbb{T}_k$ is the set of all positive integers except for $1, 2 \dots, k-1, k+1, k+3$.
\end{lem}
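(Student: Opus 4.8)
The plan is to pass to the \emph{excess over the all-ones representation}. Writing a generic element of $\mathbb{T}_k$ as $T_{n_1}+\dots+T_{n_k}$ with every $n_i\ge 1$ and separating the indices with $n_i\ge 2$ from those with $n_i=1$, one obtains
\[
N\in\mathbb{T}_k \quad\Longleftrightarrow\quad N\ge k \ \text{ and } \ N-k \ \text{ is a sum of at most } k \text{ elements of } A,
\]
where $A:=\{T_n-1 : n\ge 2\}=\{2,5,9,14,20,27,\dots\}$; here the summands $T_{n_i}$ with $n_i\ge 2$ contribute $N-k$ and the remaining $T_{n_i}=1$ contribute the base $k$. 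So the lemma reduces to describing which non-negative integers are sums of at most $k$ elements of $A$. Since $\min A=2$ and the two smallest elements of $A$ are $2$ and $5$, a nonempty sum of elements of $A$ equals $2$ or is at least $4$, while the empty sum is $0$; thus $1$ and $3$ are unattainable and $0,2$ are attained trivially.

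Everything now hinges on the claim that every integer $E\ge 4$ is a sum of at most four elements of $A$; four summands are permissible because $k\ge 4$. Granting this, a sum of at most $k$ elements of $A$ is exactly a member of $\{0,2\}\cup\{E : E\ge 4\}$, hence $\mathbb{T}_k=\{k,\,k+2\}\cup\{N : N\ge k+4\}$, whose complement among the positive integers is precisely $\{1,\dots,k-1\}\cup\{k+1\}\cup\{k+3\}$, as asserted. After adjoining copies of $T_1-1=0$, the claim is equivalent to: every integer $n\ge 8$ is a sum of four positive triangular numbers. I would derive this from the classical theorem of Gauss (every non-negative integer is a sum of three triangular numbers, with zeros allowed) together with the fact that the set $X$ of positive integers that are \emph{not} sums of three \emph{positive} triangular numbers is finite --- I expect $X=\{1,2,4,6,11,20,29\}$. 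Indeed, for $n\ge 8$ at least one of $n-1$ and $n-3$ lies outside $X$ (the members of $X$ exceeding $6$ are $11,20,29$, which are pairwise $9$ apart, so no two differ by $2$); writing that number as $T_a+T_b+T_c$ with $a,b,c\ge 1$ and appending $T_1$ or $T_2$ exhibits $n$ as a sum of four positive triangular numbers, and this covers all $n\ge 8$ at once, so no separate treatment of small $n$ is needed once $X$ is known.

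The main obstacle is the finiteness --- and exact determination --- of $X$. That the seven listed integers fail is immediate; that \emph{every} larger integer is a sum of three positive triangular numbers is the real content. It follows from Gauss's theorem once one notes that a three-triangular representation of $m$ uses a zero part only when $8m+2$ is a sum of two squares, and that such representations, being $O_\varepsilon(m^\varepsilon)$ in number, cannot exhaust the $\gg m^{1/2-\varepsilon}$ representations of $8m+3$ as a sum of three squares. Making this effective --- or simply invoking the known classification of $X$ --- is the crux; with $X$ in hand, the rest of the proof is routine bookkeeping around the excess reformulation.
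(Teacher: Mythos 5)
Your reduction to the set $A=\{T_n-1:n\ge 2\}$ is correct, and the bookkeeping around it (why $1$ and $3$ are the only unattainable excesses, why the complement of $\mathbb{T}_k$ is exactly $\{1,\dots,k-1,k+1,k+3\}$) is sound. But the entire lemma rests on the one arithmetic fact that every integer $n\ge 8$ is a sum of four $\ptn$, and at that point your proof has a genuine gap: you \emph{assert} that the set $X$ of integers not expressible as a sum of three positive triangular numbers equals $\{1,2,4,6,11,20,29\}$, and the justification you sketch cannot deliver it. The counting argument you outline --- representations of $8m+3$ as three squares number $\gg_\varepsilon m^{1/2-\varepsilon}$ while those forced to use a part $\pm1$ number $O_\varepsilon(m^\varepsilon)$ --- rests on Siegel's ineffective class-number bound, so it proves only that $X$ is finite, with no computable threshold; it cannot identify $X$, nor even reduce its determination to a finite check. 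You acknowledge this is the crux, but a proof with its crux deferred to an unestablished classification is not a proof. (Once $X$ is granted, your observation that no two of $11,20,29$ differ by $2$, so one of $n-1$, $n-3$ always lies outside $X$ for $n\ge 8$, does close the argument correctly.)

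The gap is avoidable, and the paper's route shows how cheaply: you only need \emph{four} positive triangular summands, not three, so there is no need for the delicate three-positive-triangular classification. Apply Gauss's theorem to $n-21$ for $n>21$: it is a sum of $j$ positive triangular numbers for some $j\in\{1,2,3\}$ (at least one part is nonzero since $n-21>0$), and $21$ decomposes as $21$, $6+15$, or $3+3+15$, i.e., into $4-j$ positive triangular numbers for each $j$; concatenating gives $n$ as a sum of exactly four $\ptn$. A finite check of $n\le 21$ then yields the exceptional set $\{1,2,3,5,7\}$ and hence that every $n\ge 8$ works. Replacing your appeal to $X$ by this padding trick would make your argument complete and entirely elementary.
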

\begin{proof}
Gauss' theorem guarantees that every positive integer can be written as $\tn + \tn + \tn$, some of which possibly vanish. Thus, if $n > 21$ is given, then $n-21$ is $\ptn$, $\ptn+\ptn$, or $\ptn+\ptn+\ptn$. But, since $n = (n-21)+21 = (n-21)+6+15 = (n-21)+3+3+15$, every integer $>21$ can be written as a sum of four $\ptn$. 

It can be easily verified that every positive integer $\le 21$ is a sum of four $\ptn$ except for $1, 2, 3, 5$, and $7$. Hence, we can conclude that every positive integer $\ge 8$ can be written as a sum of four $\ptn$.

Now, consider the general cases. It is clear that the sum of $k$ $\ptn$ can represent $k$ and $k+2$ but cannot represent from $1$ through $k-1$. It is also easily checked that the sum cannot represent $k+1$ and $k+3$. Since sums of four $\ptn$ represent all integers $\ge 8$, the sum
\[
\underbrace{1+\dots+1}_{k-4\text{ summands}} + \ptn + \ptn + \ptn + \ptn 
\]
represents all integers $\ge k+4$.
\end{proof}

Now let us prove Theorem \ref{thm:k=4}. Note that $f(1) = 1$ and $f(4) = 4$. From
\begin{align*}
f(6) 
&= f(1+1+1+3) = 3+f(3) \\
&= f(2) \cdot f(3) \\
f(10) 
&= f(1+3+3+3) = 1+3f(3) \\
&= f(2) \cdot f(5) \\
f(15) 
&= f(3+3+3+6) = 3f(3)+f(2) \cdot f(3) \\
&= f(3) \cdot f(5)
\end{align*}
we obtain that two solutions:
\[
f(2)=-2, f(3)=-1, f(5)=1\quad\text{or}\quad
f(2)=2, f(3)=3, f(5)=5.
\]

First case yields $f(9) = f(1+1+1+6) = 3+f(2) \cdot f(3) = 5$. But, this would make a contradiction:
\begin{align*}
f(18) 
&= f(1+1+1+15) = 3+f(3) \cdot f(5) = 2 \\
&= f(2) \cdot f(9) = -10.
\end{align*}
Thus, we can conclude that $f(2)=2$, $f(3)=3$, and $f(5)=5$. Then, $f(14) = f(1+1+6+6) = f(2) \cdot f(7)$ gives $f(7)=7$. So $f(n) = n$ for $n \le 7$.

By Lemma \ref{lem:sum_of_k_tn} every integer $\ge 8$ can be written as a sum of four $\ptn$. Thus $f$ should be the identity function by induction.

\section{$k$-additive uniqueness set}

Let $k \ge 5$. Note that
\begin{align*}
(k-2) + 16 
&= (k-2)\cdot1 + 6+10 \\
&= (k-2)\cdot1 + 1+15 \\
(k-3) + 12 
&= (k-3)\cdot1 + 3+3+6 \\
&= (k-3)\cdot1 + 1+1+10 \\
(k-4) + 19 
&= (k-4)\cdot1 + 1+6+6+6 \\
&= (k-4)\cdot1 + 3+3+3+10.
\end{align*}
Thus, the equalities give rise to the system of equations
\begin{align*}
&f(2) \cdot f(3)+f(2) \cdot f(5) = 1+f(3) \cdot f(5) \\
&2f(3)+f(2) \cdot f(3) = 2+f(2) \cdot f(5) \\
&1+3f(2) \cdot f(3) = 3f(3)+f(2) \cdot f(5).
\end{align*}
The solutions are
\begin{align*}
&f(2) = \frac14, \quad f(3) = \frac23, \quad f(5) = -2;\\
&f(2) = f(3) = f(5) = 1;\\
&f(2) = 2, \quad f(3) = 3, \quad f(5) = 5.
\end{align*}

Note that $f(k+2) = k-1+f(3)$ and $f(k+4) = k-2+2f(3)$.

If $3 \nmid (k+2)$, then the equalities
\begin{align*}
f(3(k+2))
&= f(\hspace{-.5ex}\underbrace{3+\dots+3}_{k-2\text{ summands}} \hspace{-.5ex} +\,\, 6+6) = f(3)(k-2)+2f(2) \cdot f(3) \\
&= f(3) \cdot f(k+2) = f(3) (k-1+f(3))
\end{align*}
exclude the first solution set $f(2) = \frac14, f(3) = \frac23, f(5) = -2$.

If $3 \mid (k+2)$, then we consider
\begin{align*}
f(3(k+4))
&= f(\hspace{-.5ex}\underbrace{3+\dots+3}_{k-1\text{ summands}} \hspace{-.5ex}+\, 15) = f(3)(k-1)+f(3) \cdot f(5) \\
&= f(3) \cdot f(k+4) = f(3) (k-2+2f(3)),
\end{align*}
which exclude the first solution set.

Now, consider the second solution set $f(2)=f(3)=f(5)=1$. Then, $f(T_1) = f(T_2) = f(T_3) = f(T_4) = f(T_5) = 1$. By Lemma \ref{lem:sum_of_k_tn} we have that every $T_n$ with $n \ge 4$ can be written as a sum of four $\ptn$. From the equality
\[
\tag{$\ast$}\label{eqn:T_s}
\begin{aligned}
&(k-5) + 1 + 1 + 1 + 3 + T_s \\
&= (k-5) + 6 + T_a + T_b + T_c + T_d \qquad\text{ with } a,b,c,d < s
\end{aligned}
\]
we conclude that $f(T_s) = 1$ for all $s \ge 6$ inductively.

But, if $s$ is sufficiently large, $T_s$ can be written as a sum of $k$ $\ptn$. So $f(T_s) = k$, which is a contradiction.

%Since a sum of four $\ptn$ represents $k+5$, letting $k+5 = T_a+T_b+T_c+T_d$, we obtain that
%\[
%f(2k+1) = f\big(\hspace{-1ex}\underbrace{1+\dots+1}_{k-4\text{ summands}}\hspace{-1ex}+(k+5)\big) = k-4+f(T_a)+f(T_b)+f(T_c)+f(T_d) = k
%\]
%and, thus, $f(T_{2k}) = f(k) \cdot f(2k+1) = k^2$. This is a contradiction.

Thus, we can conclude that $f(2) = 2$, $f(3) = 3$, and $f(5) = 5$. Also, the above equality (\ref{eqn:T_s}) yields $f(T_s) = T_s$ for every $s$.

If $N$ is a sum of $k$ $\ptn$, then, clearly $f(N) = N$. Otherwise, we choose an integer $M$ such that $M > k+3$ and $\gcd(M,N)=1$. Then, since $M$ and $MN$ can be written as sums of $k$ $\ptn$, $M f(N) = f(M) \cdot f(N) = f(MN) = MN$. Thus, $f(N)=N$. The proof is completed.

\end{document}